\documentclass[12pt,a4paper]{amsart}
\usepackage{amsfonts}
\usepackage{amssymb}
\usepackage{amsthm}
\usepackage{amsmath}
\usepackage{amscd} 
\usepackage{pinlabel}
\usepackage[utf8]{inputenc} 
\usepackage{t1enc}
\usepackage[mathscr]{eucal}
\usepackage{indentfirst}
\usepackage{graphicx}
\usepackage{graphics}
\usepackage{pict2e} 
\usepackage{epic}
\numberwithin{equation}{section}
\usepackage[margin=2.9cm]{geometry}
\usepackage{epstopdf} 
\usepackage{hyperref}
\hypersetup{
	colorlinks = true,
	urlcolor   = blue,
	citecolor  = blue,
} 
\usepackage{tikz-cd}
\usepackage{verbatim}
\usepackage{float}
\usepackage{makecell}
\begin{document}
	\newtheorem{de}{Definition}
	\newtheorem{ex}[de]{\emph{Example}}
	\newtheorem{thm}[de]{Theorem}
	\newtheorem{lemma}[de]{Lemma}
	\newtheorem{cor}[de]{Corollary}
	\newtheorem{con}[de]{Conjecture}
    \newtheorem{remark}[de]{Remark}
	\newtheorem{prop}[de]{Proposition}
    \title{A construction of minimal coherent filling pairs}
    \author{Hong Chang and William W. Menasco}

\address{Hong Chang\\
Department of Mathematics\\
University at Buffalo--SUNY\\
Buffalo, NY 14260-2900, USA\\
hchang24@buffalo.edu}

\address{William W. Menasco\\
Department of Mathematics\\
University at Buffalo--SUNY\\
Buffalo, NY 14260-2900, USA\\ menasco@buffalo.edu}
    
	\begin{abstract}
		Let $S_g$ denote the genus $g$ closed orientable surface. A \emph{coherent filling pair} of simple closed curves, $(\alpha,\beta)$ in $S_g$, is a filling pair that has its geometric intersection number equal to the absolute value of its  algebraic intersection number. A \emph{minimally intersecting} filling pair, $(\alpha,\beta)$ in $S_g$, is one whose intersection number is the minimal among all filling pairs of $S_g$. In this paper, we give a simple geometric procedure for constructing minimal intersecting coherent filling pairs on $S_g, \ g \geq 3,$ from the starting point of a coherent filling pair of curves on a torus.  Coherent filling pairs have a natural correspondence to square-tiled surfaces, or  {\em origamis}, and we discuss the origami obtained from the construction.
	\end{abstract}
	\maketitle
	\section{Introduction}
	A simple closed curve on a compact closed surface, $S_g$, of genus $g \geq 2$ is called \emph{essential} if it does not bound a disc. As such, going forward a ``curve in $S_g$'' will mean an essential simple closed curve in $S_g$. Two curves in $S_g$ intersect coherently if all the intersection points have the same orientation provided that the two curves are oriented. Note that it does not depend on the choice of the orientation of the curves.  Two curves are in \emph{minimal position} if the number of intersections of these curves is the minimal within the curves isotopy classes.  It is a simple observation that a coherently intersecting pair are already intersecting minimally within their isotopy classes.  Thus, for coherently intersecting curves this convenience allows us to drop the distinction between working with a curve pair and their isotopy classes.  A pair of curves in $S_g$ is \emph{filling} if over all representatives from their isotopy classes their complement in $S_g$ is a collection of discs.
	
	Let $\alpha, \beta \subset S_g$ be a filling pair.  We call $(\alpha, \beta)$ a minimally intersecting filling pairs if the intersecting number, $i(\alpha,\beta)$, is minimal among all filling pairs on $S_g$.  For Euler characteristics reasons, $i(\alpha, \beta) \geq 2g-1$.  Additionally, for Euler characteristic reasons a minimally intersecting filling pair would have the property that $S_g \setminus (\alpha \cap \beta)$ is a single open disc.  For $g=1$ this low bound is geometrical realizable with a meridian-longitude pair.  For $g=2$, an exhaustive search the finite possibilities for a filling pair intersecting $3$ times establishes that none exist and that at least $4$ intersections is needed.  However, Aougab and Huang showed in \cite{AH} that for all $g \geq 3$ there exists filling pairs of curves whose intersection achieves the $2g-1$ minima.  (More recently, also see \cite{J, N}.) Moreover, the minima can be obtained with $\alpha$ and $\beta$ intersecting coherently shown in \cite{AMN}, so ``minimally intersecting filling coherent pair'' is not an empty set for $g \geq 3$.

 The construction of minimally intersecting filling coherent pairs in \cite{AMN} largely utilizes the algebraic techniques coming from the symmetric groups.  In this paper we give an alternate geometric construction coming from simple cut-and-paste techniques.  Our construction allows one to rapidly construct by hand such filling pairs for any genus.

 \subsection{Coherent filling pairs and origamis.}

 For a filling pair of curves, $\alpha, \beta \subset S_g$, positioned in their isotopy classes so as to have $| \alpha \cap \beta |$ being minimal, the $4$-valent graph, $\alpha \cup \beta = C^1 \subset S_g$, can be thought of as the $1$-skeleton of a $2$-dimensional CW structure on $S_g$.  The dual graph, $\widehat{C}^1 \subset S_g$, is also the $1$-skeleton of a CW structure that has every $2$-cell being a quadrilateral containing a single $0$-cell, an intersection point from $\alpha \cap \beta (\subset C^1)$.  Giving each such quadrilateral $2$-cell a $[0,1] \times [0,1]$ Euclidean square structure, we obtain a square tiling of $S_g$.  If we have $\alpha$ and $\beta$ intersecting coherently then an orientation assignment to these two curves gives a natural way to assign the ``right-side, left-side, bottom, top'' categories to the four boundary $1$-cells of each square tile.  It follows that each $1$-cell of $\widehat{C}^1$ is either a left/right side gluing or a bottom/top gluing of two square tiles.  That is, we have an {\em origami} structure for $S_g$.  Specifically, we have the following result from the literature.

 \begin{thm}[\cite{CJM, J}]
     A coherent filling pair of curves naturally corresponds to an origami on $S_g$.
 \end{thm}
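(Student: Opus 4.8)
Since the statement is essentially a dictionary between two kinds of combinatorial--geometric data, the plan is to write down the translation in each direction, check that the two constructions are mutually inverse (up to isotopy of curves on one side and cut--and--paste equivalence of square tilings on the other), and verify that it intertwines the two $\mathrm{Mod}(S_g)$-actions. The one genuinely nontrivial point will be the consistency of a certain labeling in the first construction, and this is exactly where the coherence hypothesis is used.

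First I would start from $(\alpha,\beta)$ in minimal position, so that $C^1=\alpha\cup\beta$ is a $4$-valent graph filling $S_g$, and pass to the dual CW structure $\widehat C^1$ of the excerpt, whose $2$-cells are quadrilaterals $Q_p$, one per $p\in\alpha\cap\beta$, with $p$ the unique $0$-cell of $C^1$ in $Q_p$. Give each $Q_p$ the Euclidean square $[0,1]^2$ with $p$ at the center. Each of the four sides of $Q_p$ is crossed by exactly one of the four germs of edges of $C^1$ leaving $p$; two of these germs lie on $\alpha$ and two on $\beta$. I would declare the sides met by $\alpha$-germs to be the ``left/right'' pair and those met by $\beta$-germs the ``bottom/top'' pair, and use orientations fixed on $\alpha$ and on $\beta$, together with the orientation of $S_g$, to single out ``left'' versus ``right'' and ``bottom'' versus ``top.'' The claim to verify is that this is globally consistent: when $Q_p$ and $Q_q$ share a side $e^\ast$, that side is dual to an arc $e\subset\alpha$ (or $e\subset\beta$) running from $p$ to $q$, so $e^\ast$ is ``right'' for one of the two tiles and ``left'' for the other (respectively ``top''/``bottom''), and the two Euclidean structures agree along $e^\ast$ as an orientation-respecting right-to-left gluing. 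Granting this, the tiling is $S_g$ itself, all identifications are of left$\leftrightarrow$right or bottom$\leftrightarrow$top type, and collapsing every tile onto the standard square presents $S_g$ as a covering of the square torus $\mathbb R^2/\mathbb Z^2$ branched over a single point, i.e. an origami.

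Conversely, I would recover a coherent filling pair from such a square-tiled surface by drawing the midlines $\{x=1/2\}$ and $\{y=1/2\}$ in every tile: the left$\leftrightarrow$right gluings assemble the horizontal midlines into a closed curve $\alpha$, the bottom$\leftrightarrow$top gluings assemble the vertical midlines into a closed curve $\beta$, and $\alpha\cap\beta$ is exactly the set of tile centers, one point per tile. At each such point the local picture is an orientation-preserving copy of the center of $[0,1]^2$, so all intersection signs coincide and $(\alpha,\beta)$ is coherent; and the components of $S_g\setminus(\alpha\cup\beta)$ are the unions of corner-quadrants of the tiles around the tiling's vertices, each an open disc, so $(\alpha,\beta)$ is filling. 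Checking that the two constructions undo one another, and that isotopies of $(\alpha,\beta)$ correspond to cut--and--paste moves of tilings, is then routine bookkeeping.

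The step I expect to be the main obstacle is the consistency of the ``left/right/bottom/top'' assignment above, and the point is that it holds \emph{because} the pair is coherent: at a positive intersection point the cyclic order around $p$ of (outgoing $\alpha$-germ, outgoing $\beta$-germ, incoming $\alpha$-germ, incoming $\beta$-germ) is forced, which is what lets the local square orientations be chosen compatibly with the orientation of $S_g$; a negative intersection point would force the tile around it to be glued in orientation-reversing fashion, producing at best a half-translation (quadratic-differential) surface rather than an origami. So the proof really reduces to the observation that ``all intersection signs equal'' is precisely the obstruction to the local square structures fitting together into a global translation/origami structure, after which the remaining verifications are formal.
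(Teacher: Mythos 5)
Your proposal is correct and follows essentially the same route the paper sketches in the paragraph preceding the theorem: pass to the dual CW structure whose $2$-cells are quadrilaterals about the intersection points, impose the Euclidean square structure, and use the orientations of $\alpha$ and $\beta$ to make the left/right and bottom/top gluing assignments, with coherence being exactly what makes this assignment globally consistent. Your additional verification of the inverse construction (recovering the pair from the midlines of the tiles) and the explicit remark that a negative intersection would only yield a half-translation structure are welcome elaborations, but they do not constitute a different approach.
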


The Euclidean square tiles of an origami gives us a flat geometry except at finitely many branched points---one for each $2$-cell of the CW structure---which correspond to the points where the corners of the tiles are adjoined.  An origami coming from an intersecting coherent filling pair is said to have a single \emph{horizontal cylinder} and a single \emph{vertical cylinder}.  Fig. \ref{origami eg} (Fig. 1 of \cite{AMN}) illustrates such an origami for $S_3$ which happens to be the unique origami up to labeling for genus three \cite{C, AMN}.

	\begin{figure}[h]




\includegraphics[width=.9\linewidth]{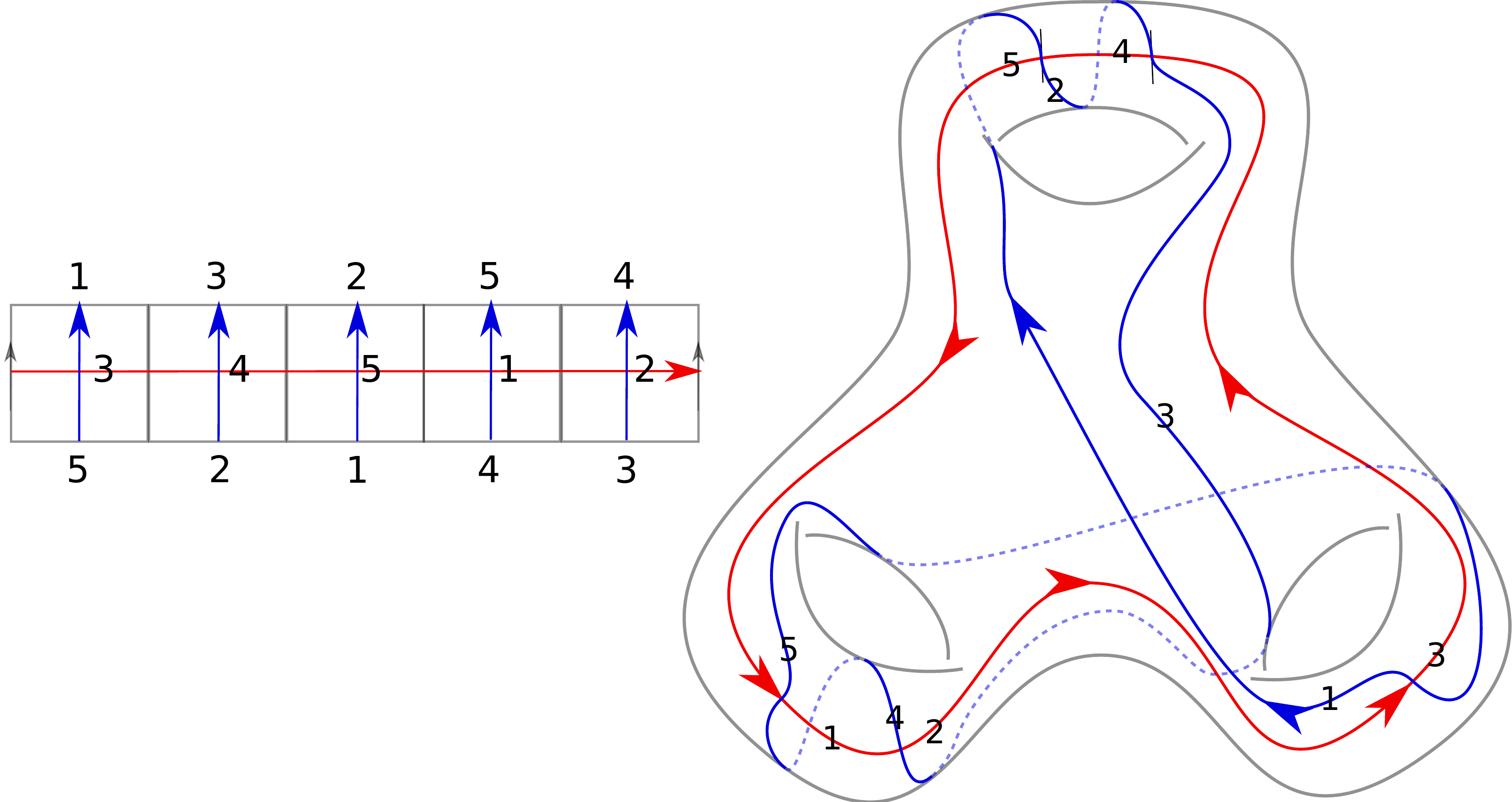}
 
	\vspace{0cm}
	\caption{The left illustrates a filling pair (which is $C^1$), the associated square tiles, and the left/right bottom/top gluing assignment for the origami structure. The right illustrates its geometric realization on $S_3$.  The left's numeric labeling of red/blue edges of $C^1$ correspond to the numeric labeling of the right.  Note that there is exactly one branched point since there is a single component of $S_3 \setminus (\alpha \cup \beta)$.}
	\label{origami eg}
\end{figure}

Flat metrics on $S_g$ coming from the Euclidean square tilings on $S_g$ induces a horizontal foliation on each individual square tile that can be extended to a measured singular foliation $\mathcal{F}$ on $S_g$. A fundamental theorem of Hubbard-Masur associates a unique quadratic differential to this horizontal foliation and, therefore, to the square-tiling \cite{HM}. A origami of $S$ can thus be interpreted as a point in the space of unit-area quadratic differentials.  When $\mathcal{F}$ is orientable, the origami supports the structure of a translation surface, and the associated quadratic differential will be the square of an abelian differential.
 
	\subsection{Outline}
 
	In \S~\ref{intro to surgery}, we introduce the cut-and-paste surgery operations that we will be utilizing.  These surgery operations have the property that, starting with a coherent filling pair of curves on a torus, we will trade an increase in the genus of the surface for a reduction in the number of discs in the complement of the filling pair.  A minimal intersecting filling pair will be realized when the number of disc components is reduced to one.
    
    Previous work on constructing and understanding minimal intersecting filling pairs has focused on the growth of the number of nonequivalent pairs \cite{AH, AMN, AT, C}.  As such, understanding how one might create nonequivalent minimal coherent filling pairs via our surgeries is of interest.  In \S~\ref{tree}, we will use a ``tree-graph'' analysis to understand how our surgery construction can yield nonequivalent filling pairs.  
    
    
    Finally, in \S~\ref{puncture case} we generalize the construction to $S_{g,p}$, oriented surfaces of genus $g$ with $p$ punctures.  In the setting of punctures surfaces the complement of a filling pair of curves is a collection of discs and once punctured discs.

\subsection*{Notation} Throughout this note we will use $S_g$ to denote a closed orientable surface of genus, $g$.  $S_{g,p}$ will denote an closed orientable surface of genus $g$ with $p$ marked or puncture points.  $\partial Y$ denotes the boundary of a compact surface, $Y$.  $|X|$ denotes the cardinality of the set $X$.

 \subsection*{Acknowledgements}  We thank the authors of \cite{AMN} for the use of Fig.~\ref{origami eg}. This work has its genesis in the second author's collaboration with Tarik Aougab and Mark Nieland and he thanks them for numerous discussion on the topic of this note. 
 
	\section{Surgeries on coherent filling pairs on $S_1$} \label{intro to surgery}

 Our strategy for constructing a minimal coherent intersecting filling pair for a genus $g \geq 3$ oriented closed surface is to start with a coherent intersecting filling pair of curves, $(\alpha ,\beta)$, on the torus, $S_1$, that intersect $g$-times.  (Using the ordered $2$-tuple, $\langle m, l \rangle (=\langle {\rm meridian, longitude} \rangle) \in \mathbb{Z} \times \mathbb{Z}$ that specifies curve isotopy classes on $S_1$, the reader might think of $\alpha \subset S_1$ as being the $\langle 0,1 \rangle$ curve and $\beta \subset S_1$ being the $\langle g,1 \rangle$ curve.)  Our construction requires that we consider two cases, when $g$ is odd and when $g$ is even.  For the odd case, through a simply surgery operation on the graph, $\alpha \cup \beta \subset S_1 $, we will add in $g-1$ new vertices.  Initially, $S_1 \setminus (\alpha \cup \beta)$ has $g$ disc components.  Our simple surgery on $\beta$ will trade disc components for genus---each surgery decreases the number of disc components by one while increasing the genus of the resulting surface by one.  The genus of the final surface will be $2g-1$ and there will be exactly one disc component in the complement of the resulting filling pair, $(\alpha, \beta^\prime)$, for $S_{2g-1}$.  For the case when $g$ is even, will need one additional simply surgery to trade disc-for-genus all the way up to $2g-1$ genus. 
 
	\subsection{Two simple surgeries on filling pairs} \label{band}
	
	\begin{figure}[h]

 \labellist
\tiny

\pinlabel $\partial_1$ [tl] at 8 328
\pinlabel $p$ [tl] at 109 334
\pinlabel $\partial_2$ [t] at 15 360
\pinlabel $\partial_4$ [tl] at 225 328
\pinlabel $\partial_3$ [t] at 235 360

\endlabellist

\includegraphics[width=.8\linewidth]{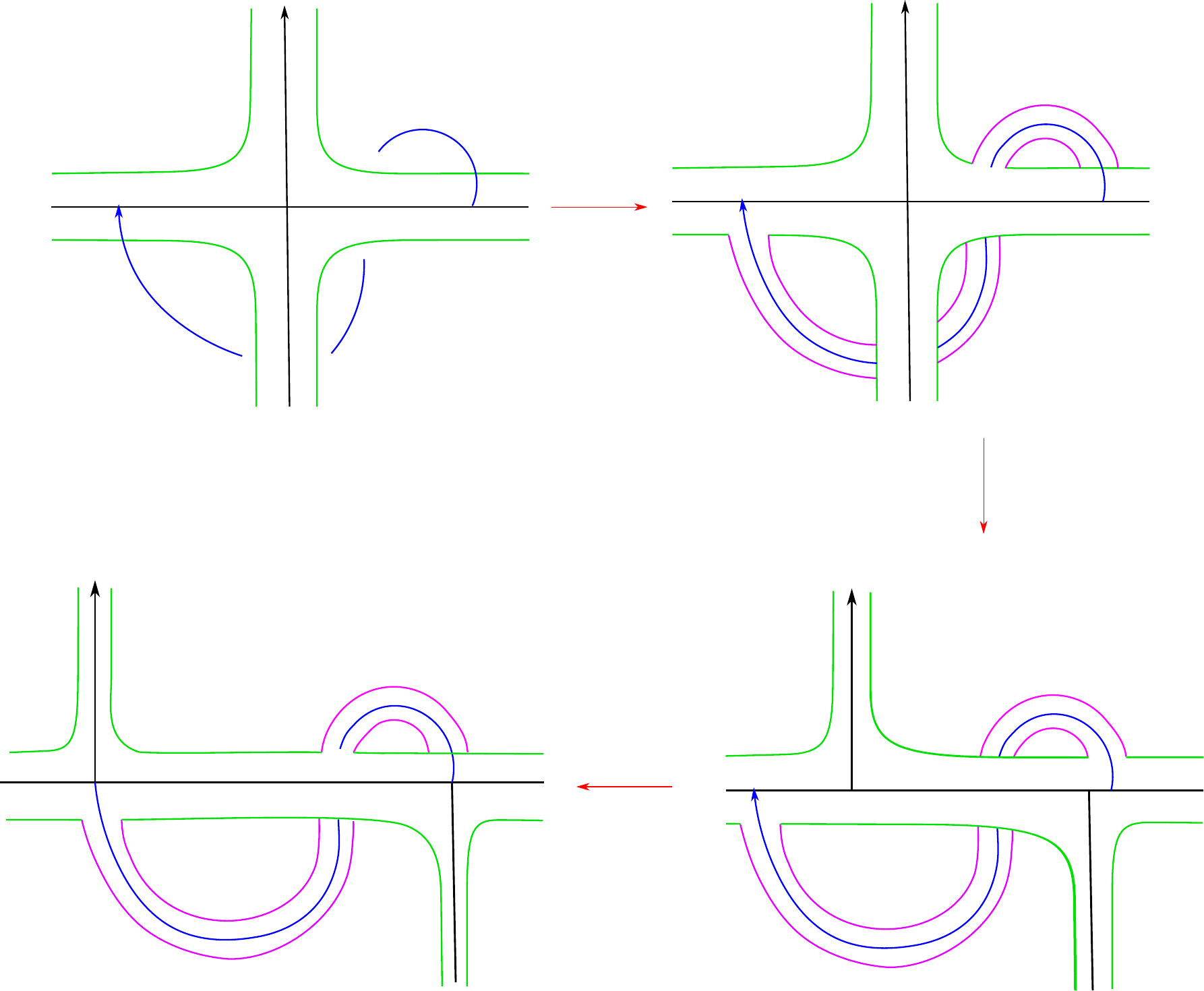}
 
	\vspace{0cm}
	\caption{The first illustration of the sequence shows the extended core of the band-to-be-added.  Its endpoints are on $\alpha$.  The second illustration shows the added band. The third and fourth illustration in the sequence show how the ``shear'' the intersection point in $\alpha \cap \beta$ and adjoin, or ``splice'', to the endpoints of the extended core of the band.  The salient feature is the $\partial_1$ and $\partial_3$ are band connected.}
	\label{oneband}
\end{figure}

Let $(\alpha, \beta)$ be a filling pair for $S_{g \geq 1}$ and consider a closed regular neighborhood, $\mathbf{N}$, of the graph, $\alpha \cup \beta \subset S_g$.  We assume that $\alpha$ and $\beta$ are positioned so as to intersect minimally within the isotopy class of, say, $\beta$.  As such, each boundary component of $\partial \mathbf{N}$ bounds a disc in $S_g$.  In particular, we focus on any small neighborhood, $\nu(p) \subset \mathbf{N}$, around $p \in \alpha \cap \beta$, one of the $4$-valent intersection points---the first illustration in the sequence of Fig.~\ref{oneband} depicts $\nu(p)$.  $\nu(p)$ will have segment portions of four boundary components, $\partial_1$, $\partial_2$, $\partial_3$ and $\partial_4$ as shown in first illustration in the sequence of Fig.~\ref{oneband}.  We remark that some of $\partial_i{\rm 's}$ may be the same component of $\partial N$.  Taking $\alpha$ near $p$ as a west/east axis and $\beta$ as a north/south axis, the four boundary segment are positioned so that $\partial_1$ is Southwest (SW), $\partial_2$ is NW, $\partial_3$ is NE, and $\partial_4$ is SE.

{\bf The single $1$-handle surgery}---We now glue to the sub-surface, $ \mathbf{N}$, a $1$-handle, $B (\cong [0,1] \times [0,1])$, that is attached to $\partial_1$ (SW) and $\partial_3$ (NE).

    Referring to the second illustration in the sequence in Fig.~\ref{oneband}, we take an arc, $\gamma$, to be the extended core of the attached $B$. The salient feature is that $\gamma$ is attached to the south side (north side) of the west portion (east portion) of $\alpha \cap \nu$.  Then $\alpha \cup \beta \cup \gamma$ will be a graph in $\mathbf{N} \cup B$ that has some number of $4$-valent vertices---same number as $|\alpha \cap \beta|$---and two $3$-valent vertices---the two endpoints of $ \gamma$.

    The third illustration in the sequence in Fig.~\ref{oneband} show a {\em shearing} of $\beta$ at the point $p$, creating two new $3$-valent vertices.  The reader should observe that we now have four $3$-valent vertices in succession on $\alpha$. The fourth illustration shows how these four $3$-valence vertices are realigned and {\em spliced} to create two new $4$-valence vertices and a new $\beta^\prime$.  The key feature of the final fourth illustration is that the orientation at the two intersections of $\alpha \cap \beta^\prime$ created by this splice is consistent with the original orientation intersection point, $\nu \cap (\alpha \cap \beta)$---crossing $\alpha$ south to north.

    We observe that if $\partial_1 \not= \partial_3$ then the $\partial(\mathbf{N} \cup B)$ has one less boundary component and the genus of $\mathbf{N} \cup B$ is increased by one.  Moreover, the curve pair $(\alpha, \beta^\prime)$, will be a filling pair in the surface obtains by capping off each component of $\partial(\mathbf{N} \cup B)$ with a disc, i.e. $S_{g+1}$.  Additionally, $|\alpha \cap \beta| +1 = |\alpha \cap \beta^\prime|$.

    The surgery sequence obviously is generalized by rotation and reflection. 

    We will refer back to this {\em shear and splice} construction numerous times in this note.
	
	\begin{figure}[h]

 \labellist
\tiny

\pinlabel $\partial_1$ [tl] at -20 85
\pinlabel $p$ [tl] at 140 89
\pinlabel $\partial_2$ [t] at -12 110
\pinlabel $\partial_4$ [tl] at 240 85
\pinlabel $\partial_3$ [t] at 250 110

\endlabellist

\includegraphics[width=.8\linewidth]{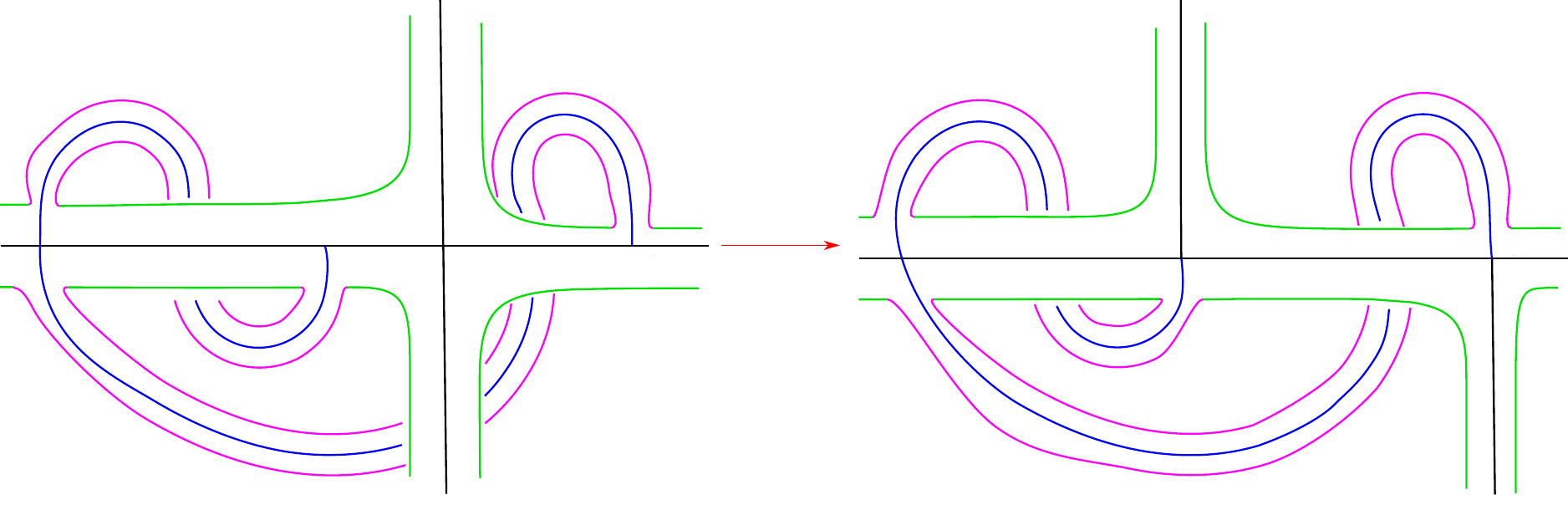}
	\caption{Banding three boundary components with one arc}
	\label{twoband}

\end{figure}

    {\bf The two $1$-handle surgery}---For this surgery we refer the reader to Fig.~\ref{twoband} on how we will alter the initial $\nu(p)$ neighborhood.  Specifically, we glue in two $1$-handles: a $1$-handle, $B_{NW/SW}$, that is attached to $\partial_2$ (NW) and $\partial_1$ (SW); and, a $1$-handle, $B_{NW/SE}$ attached to, again, $\partial_1$ (SW) and $\partial_3$ (NE).  Next, we take a core arc of each $1$-handle and extend them into $\nu(p)$ so as to create a single arc, $\gamma$, that is attached to $\alpha$ on the north (south) side of the west (east) portion in $\nu(p)$.  The blue arc in the left illustration of Fig.~\ref{oneband} corresponds to $\gamma$.  Note that at this stage $\alpha \cup \beta \cup \gamma$ is a graph in $\mathbf{N} \cup B_{NW/SW} \cup B_{SW/NE}$ having $|\alpha \cap \beta| +1$ $4$-valent vertices and two $3$-valent vertices.

    Finally, we shear $\beta$ at the point $p \in \alpha \cap \beta$ to create two $3$-valent vertices.  As with our first surgery, we will then have four $3$-valent vertices in succession on $\alpha$.  The right illustration of Fig.~\ref{oneband} shows the realignment of these four vertices creating two new $4$-valent vertices and a new $\beta^\prime$ curve by splicing into $\beta$ the extended core arc.  As with our first surgery, the two new vertices of $\beta^\prime$ are intersections with $\alpha$ that are consistent with the manner of intersection of our original point $p$---crossing $\alpha$ south to north.  Thus, again we have a shear and splice construction, going from $\beta$ to $\beta^\prime$.

    If we assume that $\partial_1, \partial_2, \partial_3$ are all distinct boundary curves of $\mathbf{N}$ then $|\partial(\mathbf{N} \cap B_{NW/SW} \cap B_{SW/NE})| = |\partial \mathbf{N}| -2$. Thus, the curve pair $(\alpha, \beta^\prime)$, will be a filling pair in the surface obtains by capping off each component of $\partial(\mathbf{N} \cap B_{NW/SW} \cap B_{SW/NE}))$ with a disc, i.e. $S_{g+2}$.  Additionally, $|\alpha \cap \beta| +2 = |\alpha \cap \beta^\prime|$

    Finally, both the surgery sequences are generalized by rotation and reflection. 
	\subsection{Constructing minimal coherent filling pairs.}
    \label{}
	With our two surgeries in hand we are now in a position to construct minimal coherent intersecting filling pairs for genus, $g \geq 3$.

	As stated at the beginning of \S~\ref{intro to surgery}, we start with a filling pair on $S_1$ that intersect $g$-times.  Again, $\alpha$ is a $\langle 0, 1 \rangle$ curve and $\beta$ as a $\langle g,1 \rangle$ curve.  We give an orientation to $\alpha$ and label the $g$ intersection points, $\{p_1, p_2, \cdots , p_g\} = \alpha \cap \beta$, such that the cyclic order of the points on $\alpha$ correspondence to the cyclic order given by indices of the $\nu(p_i)$-labels.  Next we orient $\beta$ similarly---traversing $\beta$, $mod(g)$ the $i^{\rm th}$ intersection point is $p_i$.

    Our construction requires that we consider the cases when $g$ is odd and even separately.  The top illustration in Fig.~\ref{4} has $g = 3$ and is representative of the cases having $g$ odd.  The bottom illustration of Fig.~\ref{4} has $g=4$ and is representative of the cases having $g$ even.

    \begin{figure}[h]

 \labellist
\tiny

\pinlabel $\alpha$ [t] at 45 278

\pinlabel $p_1$ [tl] at 103 272
\pinlabel $p_2$ [t] at 243 273
\pinlabel $p_3$ [t] at 372 273

\pinlabel $1$ [tl] at 113 366
\pinlabel $2$ [t] at 249 366
\pinlabel $3$ [t] at 380 366

\pinlabel $3$ [tl] at 113 193
\pinlabel $1$ [t] at 249 193
\pinlabel $2$ [t] at 380 193

\pinlabel $\alpha$ [t] at -2 87

\pinlabel $p_1$ [tl] at 70 81
\pinlabel $p_2$ [t] at 193 81
\pinlabel $p_3$ [tl] at 314 81
\pinlabel $p_4$ [t] at 472 83

\pinlabel $1$ [tl] at 65 172
\pinlabel $2$ [t] at 202 172
\pinlabel $3$ [t] at 332 172
\pinlabel $4$ [t] at 461 172

\pinlabel $4$ [tl] at 64 2
\pinlabel $1$ [t] at 202 2
\pinlabel $2$ [t] at 332 2
\pinlabel $3$ [t] at 463 2

\endlabellist

    \includegraphics[width=.8\linewidth]{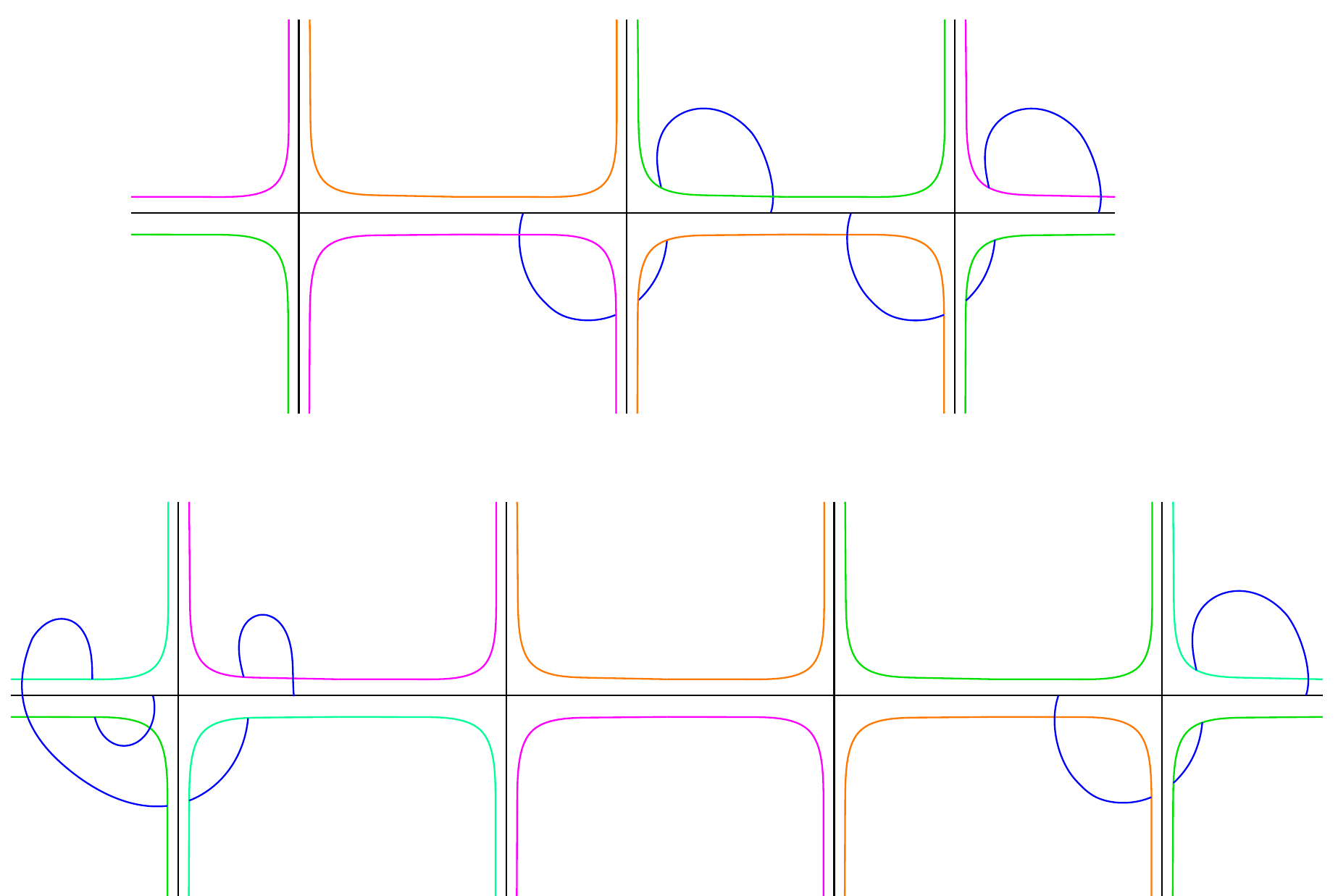}
		\caption{The top illustration has $g = 3$ and is representative of the odd case. The bottom illustration has $g=4$ and is representative of the even case. The horizontal segments in each has the right/left endpoints identified and corresponds to the $\alpha$ curve.  The labels on the endpoints of the vertical segments correspond to the identification of their endpoints so as to form the $\beta$ curve.}
		\label{4}
	\end{figure}

 Referring to Fig.~\ref{4}, it is convenient to represent the $\alpha$ curve by a horizontal line segment which has its left and right endpoints identified.  Then we can represent the $\beta$ curve by $g$ vertical line segment, each one of which intersects our $\alpha$ representation once at its midpoint.  Assigning labels---$1$ through $g$, left to right---to the top endpoints of our $g$ vertical segments and labels, $g$ then $1$ through $g-1$, to the bottom ends of the vertical segments, we realize $\beta$ by a gluing that matches the top endpoint labels with the bottom endpoint labels.
 
 It is also helpful to assign labels, $p_1$ through $p_g$, to the points of intersection of the horizontal $\alpha$ segment with the vertical segments---$p_i$ will be in the vertical segment have $i$ as a top endpoint label.  Next, when we consider a regular neighborhood, $\mathbf{N}$ of $\alpha \cup \beta \subset S_1$, near $p_i$ we have the four ``compass'' boundary curves, $NE_i, NW_i , SW_i , SE_i$, where, due to indexing scheme for connecting the labels of the vertical segments, $NE_i = NW_{i+1}, SE_{i} = SW_{i+1}, NW_i = SE_i$  To help to reader with this identification in Fig.~\ref{4} we have distinguished the components of $\partial \mathbf{N}$ by a color assignment.  The reader should observe that $|\partial \mathbf{N}| =g$.

The key issue is when a ``$1$-handle attaching scheme'' to $\partial \mathbf{N}$ results in a surface with one boundary.  (In \S~\ref{treecon} we will expand  and make precise our notion of ``scheme''.) To that end we define an {\em attaching graph or A-graph}, $G$.  The vertices of $G$ are the components of $\partial \mathbf{N}$.  And, two vertices share an edge if they share the attaching ends of a specified $1$-handle.  Then $G$ will have $g$ vertices and $g-1$ edges.  We then have the following lemma.

	\begin{lemma}
 \label{lemma:tree}
        Given an attaching scheme of $(g -1)$ $1$-handles to $\partial \mathbf{N}$, the resulting surface will have exactly one boundary component if and only if the A-graph, $G$, is a connected tree.
	\end{lemma}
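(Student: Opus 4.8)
The plan is to translate the statement into a bookkeeping problem about how the $g$ boundary circles of $\mathbf{N}$ get merged and split by the $1$-handles, and then prove the two implications separately. Write $\mathbf{N}'$ for the surface obtained from $\mathbf{N}$ by attaching the $(g-1)$ $1$-handles according to the given scheme (so the target surface is got from $\mathbf{N}'$ by capping boundary circles with discs). Since $G$ has $g$ vertices and $g-1$ edges, $G$ is a tree exactly when it is connected, so it suffices to prove that $|\partial\mathbf{N}'|=1$ if and only if $G$ is connected. The one surgery fact the argument rests on is elementary: attaching a $1$-handle whose two feet lie on \emph{distinct} boundary circles fuses those circles into one, decreasing $|\partial(\cdot)|$ by exactly one, whereas attaching a $1$-handle with both feet on a single boundary circle can never remove that circle's total contribution to the boundary.

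For the forward implication, assume $G$ is connected, hence a tree. Since $\mathbf{N}'$ does not depend on the order in which the handles are attached, I would attach them in a breadth-first order rooted at some vertex, so that every handle $e_k$ joins a vertex $u$ already encountered to a vertex $w$ encountered for the first time. The key observation is that, as no previous handle is incident to $w$, the original boundary circle indexed by $w$ is still present and untouched in the surface-so-far, and it is different from the boundary circle carrying the other foot of $e_k$ (which contains arcs of the circle indexed by $u\ne w$). So attaching $e_k$ fuses two distinct boundary circles and drops the count by one; starting from the $g$ boundary circles of $\mathbf{N}$ and performing $g-1$ such steps leaves exactly one.

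For the converse I would argue the contrapositive: if $G$ is disconnected, with components $G_1,\dots,G_c$ and $c\ge 2$, then $|\partial\mathbf{N}'|\ge 2$. Each boundary circle of $\mathbf{N}'$ is a cyclic chain of arcs of $\partial\mathbf{N}$ and of $1$-handle sides, and an arc of the original circle indexed by $v$ can be adjacent along $\partial\mathbf{N}'$ only to a side-arc of a handle incident to $v$; iterating, every boundary circle of $\mathbf{N}'$ is carried by a single component $G_i$, whence $|\partial\mathbf{N}'|=\sum_{i=1}^{c} b_i'$ with $b_i'$ the number of boundary circles carried by $G_i$. It remains to check $b_i'\ge 1$ for each $i$. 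If $b_i'=0$, consider the subsurface $Z_i\subset\mathbf{N}'$ built from thin collar annuli on the boundary circles of $\mathbf{N}$ that are vertices of $G_i$, together with the handles that are edges of $G_i$; then $b_i'=0$ forces $\partial Z_i$ to consist only of the inner collar circles, so capping those with discs yields a closed connected surface that deformation retracts to a graph --- impossible, since a regular neighborhood of a graph always has nonempty boundary. Hence each $b_i'\ge 1$ and $|\partial\mathbf{N}'|\ge c\ge 2$.

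Combining the two halves shows $|\partial\mathbf{N}'|=1$ exactly when $G$ is connected, i.e. exactly when $G$ is a tree. The step I expect to need the most care is the forward direction's guarantee that the handle attachments can all be realized as genuine fusions of two distinct circles: a priori an attachment could instead split a boundary circle --- pushing the count up --- if its two feet had been carried onto the same current circle by earlier handles, and it is precisely the breadth-first ordering, together with the observation that the newly encountered vertex-circle is still pristine, that rules this out. One could alternatively track the pair (genus, number of boundary circles) of the surface-so-far under the attachments and combine it with the fixed value $\chi(\mathbf{N}')=\chi(\mathbf{N})-(g-1)$ to force every handle to be fusing when $|\partial\mathbf{N}'|=1$; the argument sketched here bypasses even this.
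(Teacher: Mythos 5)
Your argument is correct in substance and takes a genuinely different route from the paper's. For the direction ``$G$ a connected tree $\Rightarrow$ one boundary component,'' the paper runs the process backwards: it deletes the interiors of the $(g-1)$ $1$-handles from the resulting surface and counts how boundary components change under deletion, using connectedness of $G$ to conclude the count is one. Your breadth-first ordering of the tree edges---which guarantees that each handle has one foot on a still-pristine circle and hence is a genuine fusion of two distinct circles---is more direct and makes the induction airtight without reversing the construction. For the converse, the paper argues via a minimal counterexample $\hat g$ together with a maximal sub-collection of handles whose A-graph is a forest; your contrapositive, resting on the observation that each boundary circle of $\mathbf{N}'$ is a cyclic chain of arcs of $\partial\mathbf{N}$ and handle sides and therefore is carried by a single component of $G$, is more elementary and avoids the induction on $g$ entirely. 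The one spot to tighten is your justification that each component $G_i$ carries at least one boundary circle of $\mathbf{N}'$: capping the inner collar circles of $Z_i$ does not produce a surface that deformation retracts to a graph, so that sentence does not deliver the stated contradiction. The claim itself is immediate, however: the feet of the handles meet each original circle $C_v$ in finitely many disjoint closed arcs, so $C_v$ minus the feet is a nonempty union of open arcs, and any such arc lies on a boundary circle of $\mathbf{N}'$ carried by the component of $G$ containing $v$. With that one-line repair your proof is complete, and arguably cleaner than the one in the paper.
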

 \vspace{5pt}

 The proof of the following lemma will be delayed until \S~\ref{tree}.

 We now give two schemes---one for $g$ odd and one for $g$ even---attaching $1$-handles to $\mathbf{N}$, both utilize the $1$-handle surgeries of \S~\ref{band}.

\noindent
{\bf Case where $g$ is odd.} In a neighborhood of each intersection point, $p_i, \ 2 \leq i \leq g$, we perform a single $1$-handle surgery attached to ${SW}_i$ to ${NE}_i$ for $2 \leq i \leq g$.  It is readily observed that the graph, $G$, is a linear tree.  (The reader may wish to consult the top of Fig.~\ref{4}.)  Thus, by Lemma~\ref{lemma:tree} the resulting surface has one boundary component and is of genus $(2g-1)$.

As previously observed, the resulting filling pair will still have coherent intersection. $\square$
\vspace{5pt}

\noindent
{\bf Case where $g$ is even.}  In a neighborhood of $p_i$ we perform a two $1$-handel surgery: attaching a $1$-handle between ${NW}_1$ and ${SW}_1$; and, ${NW}_1$ and ${SE}_1$. Then, in a neighborhood of each intersection point, $p_i, \ 4 \leq i \leq g$, we perform a single $1$-handle surgery attached to ${SW}_i$ to ${NE}_i$ for $4 \leq i \leq g$.  Again, it is readily observed that the associated graph, $G$, is a linear tree.  (The reader may wish to consult the bottom of Fig.~\ref{4}.)  And, Lemma~\ref{lemma:tree} again gives us that the resulting surface has one boundary component and is of genus $(2g-1)$.

And again, the resulting filling pair will still have coherent intersection.
$\square$
\vspace{5pt}

Based upon the above two surgery schemes we can state the following result.

\begin{thm}
 \label{theorem: scheme}
		For genus $g\geq 3$, we can create minimal coherent filling pairs utilizing the two $1$-handle surgeries of \S~\ref{band}.
	\end{thm}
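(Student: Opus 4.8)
The plan is to verify that the two surgery schemes described just above---single $1$-handle surgeries at $p_2,\dots,p_g$ when $g$ is odd, and a two $1$-handle surgery at $p_1$ together with single $1$-handle surgeries at $p_4,\dots,p_g$ when $g$ is even---each output a pair $(\alpha,\beta')$ that is at once minimally intersecting, coherent, and filling on $S_g$, and that the two cases exhaust all $g\geq 3$. The argument splits into four checkpoints: (i) the attaching graph $G$ is a tree; (ii) the surgered surface, once its single boundary circle is capped, is $S_g$; (iii) $i(\alpha,\beta')=2g-1$, so the pair is minimally intersecting; (iv) $(\alpha,\beta')$ is coherent and its complement in $S_g$ is a single disc.

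Checkpoint (i) is the combinatorial heart of the proof. Using the identifications among the $g$ boundary components of $\mathbf{N}$ recorded above---$NE_i=NW_{i+1}$, $SE_i=SW_{i+1}$, $NW_i=SE_i$, with indices taken mod $g$---I would determine, for each $1$-handle in the prescribed scheme, the unordered pair of $\partial\mathbf{N}$-components it joins, and then check that the $g-1$ resulting edges string the $g$ vertices of $G$ into a single path. Since $|V(G)|=g$ and $|E(G)|=g-1$, a graph on this vertex and edge count is a tree precisely when it is connected, so exhibiting the path suffices. Lemma~\ref{lemma:tree} then yields that the surgered surface has exactly one boundary component.

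Checkpoint (ii) is a bookkeeping exercise against \S~\ref{band}. Because $G$ is a tree, when any $1$-handle is attached it joins boundary components that are still distinct (equivalently, the distinctness hypotheses of \S~\ref{band} hold at each step), so each single-handle surgery raises the genus by one and the two-handle surgery raises it by two. As $\mathbf{N}$ has genus $1$ and the attached handles contribute total genus $g-1$ in either scheme, the surgered surface has genus $g$, and capping its one boundary circle produces $S_g$. (This agrees with the Euler characteristic of $\alpha\cup\beta'\subset S_g$, which has $2g-1$ vertices, $4g-2$ edges, and one complementary disc, so $\chi=2-2g$.) For checkpoint (iii): each shear-and-splice trades one transverse crossing for two, so $i(\alpha,\beta')$ passes from $g$ to $g+(g-1)=2g-1$ in the odd scheme and from $g$ to $g+2+(g-3)=2g-1$ in the even scheme; since every filling pair on $S_g$ satisfies $i\geq 2g-1$, the pair $(\alpha,\beta')$ is minimally intersecting, and its single complementary disc certifies that it fills. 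For checkpoint (iv): the orientation observation in \S~\ref{band}---the two crossings created by a shear-and-splice cross $\alpha$ in the same direction as the crossing they replace---shows that coherence of the initial torus pair $(\langle 0,1\rangle,\langle g,1\rangle)$ is preserved by every surgery, so $(\alpha,\beta')$ is coherent; coherence in turn forces minimal position, whence $|\alpha\cap\beta'|=i(\alpha,\beta')=2g-1$.

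I expect checkpoint (i) to be the sole real obstacle. One must confirm that the prescribed handles really do assemble into a single path---rather than leaving $G$ disconnected or creating a cycle---for every $g$ within each parity class, and in particular must track the wrap-around identification at the $p_1$/$p_g$ corner carefully; this is exactly the point at which the odd/even dichotomy is forced. The remaining checkpoints are routine once Lemma~\ref{lemma:tree}, the surgery accounting of \S~\ref{band}, and the bound $i(\alpha,\beta)\geq 2g-1$ recalled in the introduction are in hand.
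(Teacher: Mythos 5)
Your proposal follows the paper's own argument: exhibit the odd/even attaching schemes, check that the A-graph is a linear tree, invoke Lemma~\ref{lemma:tree} to obtain a single boundary component, and observe that each shear-and-splice preserves coherence, so the pair fills with $2g-1$ intersections. Your bookkeeping is in fact more careful than the paper's terse version (the paper misstates the final genus as $2g-1$ when it is $g$, with $2g-1$ being the intersection number, and your explicit appeal to the lower bound $i(\alpha,\beta)\geq 2g-1$ to certify minimality is a worthwhile addition), but the route is essentially identical.
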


By now the reader may have realized that there are other choices one may make for attaching $1$-handles, shearing vertices and slicing in the extended handles cores so as to obtain a single boundary curve and a new $\beta^\prime$.  In the next sections we investigate other such choices.
 
 

 \begin{remark}
{\rm For $g=2$, if we attempt to attach a single $1$-handles to $\partial \mathbf{N}$ using the single $1$-handle surgery we ``run out of room''.  That is, the associated graph, $G$, will not be a connected tree since both points,  $p_1 , p_2 \subset \alpha \cup \beta \subset \mathbf{N}$, are adjacent to just the two boundary curves of $\partial \mathbf{N}$.  Thus, we cannot realize a filling pair such that $\alpha \cap \beta = 2 \cdot g-1 = 2\cdot 2 - 1 =3$.  This ``failure to construct'' is consistent with the fact the for genus $2$ we need $|\alpha \cap \beta| = 4$.}
 \end{remark}

\section{$1$-handle attaching schemes and filling pair} \label{tree}

We now supply the proof of our previously used lemma.

\begin{proof}[Proof of Lemma~\ref{tree}.]

First, we will assume that the resulting surface has exactly one boundary component.  We will argue that $G$ must be a tree.

We observe that our definition of the graph $G$ is really independent of the filling pair and only dependent on the surface type of $\mathbf{N}$.  That is, $\mathbf{N}$ is homeomophic to $S_{1,0,g}$, a compact surface of genus one having no puncture (or marked points) and $g$ boundary components.  By attaching a $1$-handle to any surface with boundary we either increase by one or decrease by one the number of boundary components of the resulting surface.  To do the former (latter), both ends of the $1$-handle must be attached to the same (different) boundary component(s).

With the above in mind, we take a $\hat{g}$ to be of minimal value for which the theorem is not true.  Then for $\mathbf{N} \cong S_{1,o,\hat{g}}$, there is an attaching scheme of $(\hat{g}-1)$ handle on the $\hat{g}$ boundary components that produce a single boundary curve, but the associated A-graph is not a tree.

We next take a maximal sub-collection of handles that results in an A-graph, $G^\prime$, having each component of $G^\prime$ is a tree.  (By assumption this sub-collection has fewer than $(\hat{g} -1)$ handles.)  The cardinality of our sub-collection of handles is $(\hat{g} - |G^\prime|)$.  And, there are $|G^\prime| - 1$ remaining handles to attach.  Moreover, each component accounts for one boundary component of the resulting sub-surface, i.e. there are $|G^\prime|$ boundary component.

If we now attach a one of the handles not in our maximal sub-collection, it must result in a component of our A-graph being not a tree.  This implies that both ends of this handle are attached to the same boundary component.  But, this is not possible.  We are in the situation where we have a surface with $|G^\prime| ( < \hat{g})$ boundary curves and $|G^\prime| -1 $ handles to be attach resulting in a single boundary component.  By the assumption that $\hat{g}$ is minimal for realizing a counterexample, we have a contradiction.

For the other direction of our theorem we proceeding inductively.  If there are initially two components of $\partial \mathbf{N}$, attaching a $1$-handle between them will produce a single boundary component and the associated $G$ is a tree.  Now suppose we can attach  


Now consider the associated graph, $G$, coming from an attachment scheme of $(g-1)$ $1$-handles to the $g$ boundary components of $\mathbf{N}$.  And, assume that $G$ is a connected tree.  We use $\mathbf{N}^\prime$ to denote the resulting surface and we observe that $\mathbf{N}$ is naturally seen as a sub-surface in $\mathbf{N}^\prime$.  Moreover, we can obtain $\mathbf{N}$ from $\mathbf{N}^\prime$ by deleting the open sets in $\mathbf{N}^\prime$ that correspond to the ``interior'' of the $1$-handles---homeomorphically equivalent to $(0,1) \times [0,1]$.

We then have a similar behavior to that described in the first half of our argument.  By deleting the interior of a $1$-handle to any surface with boundary we either increase or decrease by one the number of boundary components of the resulting surface.  To do the former (latter), both components of $\{1 {\rm-handle}\} \cap \partial \mathbf{N}^\prime$ must on the same (different) boundary component(s) of $\mathbf{N}^\prime$.  Since the deletion of $(g-1)$ interiors of $1$-handles in $\mathbf{N}^\prime$ produces a surface with $g$ boundary components, by $G$ being connected---every boundary of $\mathbf{N}$ has at least one $1$-handle attached---we conclude that $|\partial \mathbf{N}^\prime|=1$.
\end{proof}
	
		
As previously observed, the $1$-handle attaching scheme of Theorem~\ref{theorem: scheme} is not unique in that one can readily construct other $1$-handle attaching schemes whose associated graph $G$ is a tree.  In Fig.~\ref{treecon} we offer such an example.

	\begin{figure}[h]

  \labellist
\tiny

\pinlabel $\gamma$ [t] at 59 160





\pinlabel $p_1$ [tl] at 17 120
\pinlabel $p_2$ [t] at 65 120
\pinlabel $p_3$ [tl] at 102 120
\pinlabel $p_4$ [t] at 150 120
\pinlabel $p_5$ [tl] at 187 120
\pinlabel $p_6$ [t] at 236 120



\endlabellist

\includegraphics[width=.8\linewidth]{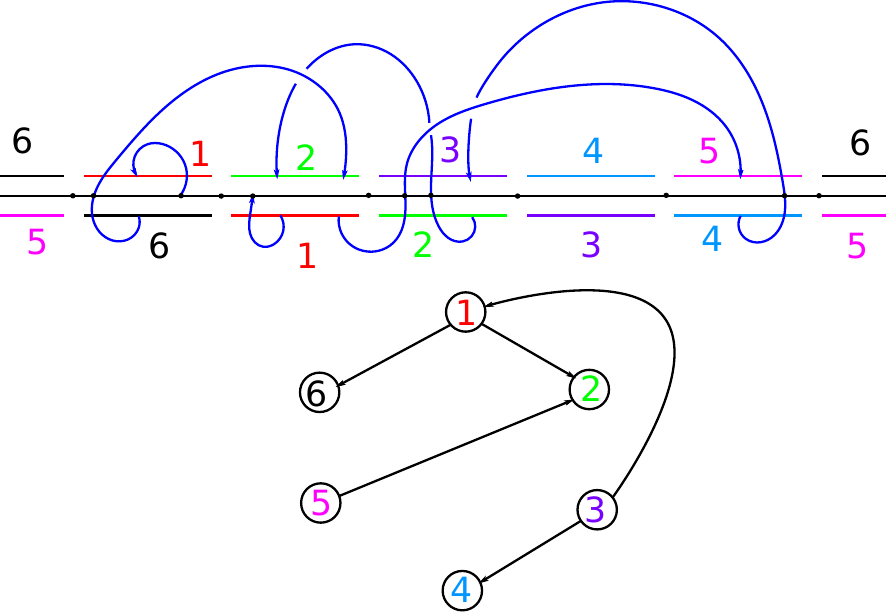}
		\caption{The arc, $\gamma$, contains the extended core of five $1$-handles.  There is only one shear and splice which is at $p_2$.  It splices $\gamma$ into $\beta$ to produce  $\beta^\prime$.  The lower illustration depicts the associated graph, $G$, which is a tree.}
		\label{treecon}
	\end{figure}

\begin{ex}[An attaching scheme for $S_{11}$.]{\rm 
Initially, the $\beta$ curve is a $(6,1)$ curve on $S_1$ with $\alpha$ again being the $(0,1)$ curve.  As before, we will have $\{p_1, \cdots, p_6 \} = \alpha \cap \beta$ which we indicate in the top illustration of Fig.~\ref{treecon}.  To reduce the clutter we do not depict the vertical arcs associated with $\beta$.  Finally, we double-label the boundary components of $\partial \mathbf{N}$ with numeric-colored labels.

We depict an oriented blue arc, $\gamma$, that has endpoints in $\alpha$ to the left and right of the point, $p_2( \subset \alpha \cap \beta)$.  We will use $\gamma$ for a scheme of attaching five $1$-handles to the components of $\partial \mathbf{N}$.  Specifically, $\gamma$ is the union of five extended $1$-handled cores.  These five extended cores have their endpoints in $\alpha$: one to the right of $p_1$; one to the right of $p_3$; one to the left of $p_6$; and, the two endpoints that are to the left and right of $p_2$.

The last two listed points, the endpoints of $\gamma$ are of particular interest.  They are positioned on $\alpha$ so that we can splice $\gamma$ into $\beta$ by a shearing of $\beta$ at $p_2$ followed by a reconnecting of endpoints as previously depicted in Fig.'s~\ref{oneband} \& \ref{twoband}.  The resulting curve will be our new $\beta^\prime$.  Returning to the orientation assignment of $\gamma$, it is consistent with the orientation of $\beta$ that we have been assigning---edges of $\beta$ are depicted as coming into $\alpha$ from below and going out of $\alpha$ from above.  Thus, the new $\beta^\prime$ will have coherent intersection with $\alpha$.

The bottom illustration of Fig.~\ref{treecon} depicts the associated A-graph, $G$.  (The validity of $G$ we leave it to the reader to check.)  Since $G$ is a tree by our Lemma~\ref{lemma:tree} we conclude that this scheme for attaching $1$-handles yields a minimal coherent filling pair for $S_{11}$.  Moreover, the extended cores of the $1$-handles inherent an orientation from $\gamma$ that results in giving each edge of $G$ an orientation, e.g. the core of the $1$-handle goes from the $1$/red boundary curve to the $6$/black boundary curve giving us a edge in $G$ going from the $1$/red vertex to the $6$/black vertex.}
$\diamond$
\end{ex}

From this example we see that an attaching scheme for a collection of $1$-handles can be described by specifying a disjoint collection of oriented arcs, $\{\gamma_1, \cdots , \gamma_n\}$, that have their endpoints on our initial $\alpha$ curve---a $ \langle 0,1 \rangle$ curve on $S_1$---and intersect $\alpha$ is a coherent manner that is consistent with that of $\beta$---a $\langle g,1 \rangle$ curve on $S_1$.  Then, $1$-handles are attached to $\mathbf{N}$ so as to have their extended cores equal $\cup_{1 \leq i \leq n} \gamma_i$.  Each $\gamma_i$ satisfies the following conditions.  (We continue our appeal to the setup: $\alpha$ and $\beta$  curves in $S_1$ coherently intersect and $\mathbf{N}$ is a regular neighborhood $\alpha \cup \beta$.  We also visually depict the setup in the same manner: $\alpha$ a single horizontal arc with left/right endpoints identified; and, $\beta$ as $g$ vertical segments oriented pointing bottom-to-top.)

\begin{itemize}
    \item[1.]    For $\partial \gamma_i$ there exists an intersection point $p \in \alpha \cap \beta$ such that on $\alpha$ these two endpoints are to the immediate left/right of $p$.
    \item[2.]  $\gamma_i$ intersects $\alpha$ in a coherent manner. 
    \item[3.] $\gamma_i$ is attached to $\alpha$ and oriented such that a shear and splice operation at the point $p$ (previous condition) yields a consistently oriented curve coherently intersecting $\alpha$.
\end{itemize}
A collection of $\gamma$ arcs satisfying the above three conditions are said to be a {\em $1$-handle attaching scheme}.

Performing the shear and splice operation for each $\gamma$ arc of a $1$-handle attaching scheme will yield a curve pair, $(\alpha, \beta^\prime)$, for some oriented some closed surface.  Additionally, a corresponding A-graph, $G$, can be constructed.

We have the following theorem whose proof is now self-evident.

\begin{thm}
    Let $\{\gamma_1, \cdots , \gamma_n\}$ be a $1$-handle attaching scheme.  Suppose $$|\cup^n_1 \gamma_i \cap \alpha| -n = g-1 . $$
    Then the resulting curve pair is a minimal coherent filling pair for a $S_g$ if and only if the A-graph $G$ is a connected tree.
\end{thm}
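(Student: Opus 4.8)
The plan is to string together three things, all of them available at this point: the fact from \S~\ref{band} that each shear-and-splice is coherence-preserving and turns a $1$-handle attaching scheme into an honest simple closed curve $\beta'$; the elementary observation that the scheme attaches exactly $|\bigcup_{i=1}^{n}\gamma_i\cap\alpha|-n$ one-handles, so the displayed hypothesis is the assertion that this number is $g-1$; and Lemma~\ref{lemma:tree}, which converts ``$G$ is a connected tree'' into ``the surgered neighborhood has exactly one boundary component''. The theorem then falls out of one Euler-characteristic line.

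First I would do the bookkeeping. Let $\mathbf{N}$ be a regular neighborhood of $\alpha\cup\beta$ on $S_1$, with $\alpha$ the $\langle 0,1\rangle$ curve and $\beta$ the $\langle g,1\rangle$ curve of the scheme's setup; then $\mathbf{N}$ has genus one, $\chi(\mathbf{N})=-g$, and $|\partial\mathbf{N}|=g$, so the A-graph $G$ has $g$ vertices. By definition an arc $\gamma_i$ meeting $\alpha$ in $k_i$ points is a concatenation of $k_i-1$ extended $1$-handle cores, so the scheme attaches $\sum_i(k_i-1)=|\bigcup_{i=1}^{n}\gamma_i\cap\alpha|-n=g-1$ handles; hence $G$ has $g$ vertices and $g-1$ edges, and is a connected tree precisely when it is connected. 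Next I would record the output of the surgeries: write $\mathbf{N}'$ for $\mathbf{N}$ with these $g-1$ handles attached. Performing the prescribed shear-and-splice at each $\gamma_i$ produces a graph $\alpha\cup\beta'$ which is a spine of $\mathbf{N}'$; $\beta'$ is a single embedded simple closed curve because the $\gamma_i$ are disjoint embedded arcs and conditions (1) and (3) make each splice a re-routing of $\beta$ rather than a disconnection; and, by the construction in \S~\ref{band} together with conditions (2) and (3), every point of $\alpha\cap\beta'$ crosses $\alpha$ in the same sense, so $(\alpha,\beta')$ is coherent. Finally $\chi(\mathbf{N}')=\chi(\mathbf{N})-(g-1)=1-2g$, and capping each boundary circle of $\mathbf{N}'$ with a disc yields a closed orientable surface of Euler characteristic $(1-2g)+|\partial\mathbf{N}'|$.

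Then I would run the two implications. If $G$ is a connected tree, Lemma~\ref{lemma:tree} gives $|\partial\mathbf{N}'|=1$, so capping yields a closed surface $\Sigma$ with $\chi(\Sigma)=2-2g$, i.e.\ $\Sigma\cong S_g$, and $\Sigma\setminus(\alpha\cup\beta')$ is the single capping disc since $\alpha\cup\beta'$ is a spine of $\mathbf{N}'$; thus $(\alpha,\beta')$ is a coherent filling pair of $S_g$ with one-disc complement, so $i(\alpha,\beta')=2g-1$, which is the Euler-characteristic lower bound for $S_g$ and hence minimal. Conversely, if $(\alpha,\beta')$ is a minimal coherent filling pair for $S_g$, then $S_g$ is recovered from $\mathbf{N}'$ by capping its boundary circles with discs, and comparing $\chi(S_g)=2-2g$ with $(1-2g)+|\partial\mathbf{N}'|$ forces $|\partial\mathbf{N}'|=1$; by Lemma~\ref{lemma:tree}, $G$ is a connected tree. (For $g=2$ both sides are vacuously false: at each intersection point of the initial pair the SW and NE boundary arcs of $\mathbf{N}$ lie on the same component of $\partial\mathbf{N}$, so no admissible scheme satisfying the hypothesis can make $G$ a tree, nor produce a filling pair of $S_2$ --- the ``run out of room'' phenomenon of \S~\ref{intro to surgery}.)

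I do not expect a genuine obstacle here; the statement is, as the paper says, self-evident once \S~\ref{band} and Lemma~\ref{lemma:tree} are in hand. The only step that needs a moment's care is the combinatorial one: verifying that an arc meeting $\alpha$ in $k$ points contributes $k-1$ handles --- so that the hypothesis really says $G$ has $g-1$ edges --- and checking that the shear-and-splice operations leave $\beta'$ connected and embedded. Everything downstream of that is the Euler-characteristic computation above plus citations of the coherence claim from \S~\ref{band} and of Lemma~\ref{lemma:tree}.
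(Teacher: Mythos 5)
Your proposal is correct and takes essentially the same route the paper intends: the paper offers no written proof (it declares the statement ``self-evident''), and what it is implicitly relying on is exactly your three ingredients --- the handle count $\sum_i(k_i-1)=|\bigcup\gamma_i\cap\alpha|-n$, Lemma~\ref{lemma:tree} translating ``$G$ is a connected tree'' into ``$|\partial\mathbf{N}'|=1$'', and the coherence-preservation of shear-and-splice from \S~\ref{band} --- assembled by the Euler-characteristic computation you carry out. Your bookkeeping correctly lands the capped surface at genus $g$ with $2g-1$ intersection points, matching the theorem as stated.
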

	
	\section{Cases with punctures}
 \label{puncture case}

 We now extend our minimal coherent filling pair construction to orientable finite type surfaces, $S_{g,p}$, where the genus is $g\ge 3$ and there are $p(>0)$, punctures (or marked) points.  A pair of curves, $\bar{\alpha}, \bar{\beta} \subset S_{g,p}$, is {\em filling} if, when $\bar{\alpha}$ and $\bar{\beta}$ are positioned to intersect minimally within their isotopy classes, $S_{g,p} \setminus (\bar{\alpha} \cup \bar{\beta})$ is a collection of discs and once punctured discs.  By an Euler characteristic argument, the minimal number of intersections needed for a pair of curves to fill is $2g + p -2$ \cite{J}.  If $(\bar{\alpha} ,\bar{\beta})$ is a minimal filling pair then $|S_{g,p} \setminus (\bar{\alpha} \cup \bar{\beta})| = p$.  Alternatively, if we consider a regular neighborhood, $\mathbf{N}(\bar{\alpha} \cup \bar{\beta}) \subset S_{g,o,p}$ we would have $|\partial \mathbf{N}| = p$.
 
We slightly modify our initial setup of $\alpha, \beta \subset S_1$ by requiring $|\alpha \cap \beta| = g + p -1$.  Thus, $\beta$ is now a $\langle (g + p -1) ,1 \rangle$ curve on $S_1$ while $\alpha$ is still a $\langle 0,1 \rangle$ curve.  This implies that $S_1 \setminus (\alpha \cup \beta)$ has $(g+p-1)$ discs components.  We again denote a regular neighborhood of $\alpha \cup \beta$ by $\mathbf{N} (\subset S_1)$.

We define a collection of $\gamma$ arcs, $\{\gamma_1 , \cdots , \gamma_n\}$, as being a $1$-handle attaching scheme in exactly the same manner as that of \S~\ref{tree}.  As such we can consider the A-graph, $G$, of an attaching scheme.  If $|G| = p$ with each connected sub-graph component being a tree where exactly one of its vertices corresponds to a $\partial_i$ boundary component of $\mathbf{N}$, then the resulting curve pair, $(\alpha , \beta^\prime)$, will be filling.  And additionally, if $$| \cup_1^n \gamma_i \cap \alpha| - n = g  -1 , $$ the resulting pair, $(\alpha , \beta^\prime)$, will be minimal, i.e. $(g + p -1) + (g-1) = 2g + p -2$.  Since our third condition for an attaching scheme requires that the shear and splice operation produce a $\beta^\prime$ that coherently intersects $\alpha$, we have our conditions for constructing minimal coherent filling pairs for $S_{g,p}, g \geq 3, p>0$.

\begin{thm}[Also see \cite{J}, Theorem 1.3.]
    \label{theorem:finite type}
    Minimal coherent filling pairs for $S_{g,p}, g \geq 3, p>0$, exist for all such $g$ and $p$.
\end{thm}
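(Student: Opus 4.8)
The plan is to construct, for each $g \geq 3$ and $p > 0$, an explicit $1$-handle attaching scheme $\{\gamma_1, \dots, \gamma_n\}$ on the torus setup $\alpha, \beta \subset S_1$ with $|\alpha \cap \beta| = g+p-1$, whose associated A-graph $G$ has the two properties flagged just before the statement: $|G| = p$, and each of the $p$ connected components of $G$ is a tree containing exactly one vertex corresponding to a boundary component $\partial_i$ of $\mathbf{N}$ that will remain the boundary of a once-punctured disc. The counting constraint we must also arrange is $|\cup_1^n \gamma_i \cap \alpha| - n = g-1$, which, combined with the discussion preceding the theorem, forces $(\alpha,\beta')$ to be minimal of genus $g$ with $p$ punctures. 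Since conditions (1)--(3) of a $1$-handle attaching scheme already guarantee coherence of $(\alpha,\beta')$, once such a scheme is exhibited the theorem follows immediately from the earlier framework.

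First I would recall the closed-surface construction of \S\ref{band}: there, starting from $|\alpha\cap\beta| = g$ we attached $g-1$ handles (one at each of $p_2,\dots,p_g$, using the single $1$-handle surgery, or with one two-handle surgery prepended in the even case) so that $G$ was a linear tree on $g$ vertices. The natural move here is to first run exactly that construction on the last $g-1$ of the $g+p-1$ intersection points — say using $p_p, p_{p+1}, \dots, p_{g+p-1}$ — attaching $g-1$ handles whose A-graph restricted to those boundary components $\partial_p, \dots, \partial_{g+p-1}$ of $\mathbf{N}$ is a connected tree. That accounts for $n$ with $|\cup\gamma_i\cap\alpha| = 2(g-1)$ contributions paired into $n = g-1$ arcs (or the analogous count when a two-handle surgery is used, which still satisfies $|\cup\gamma_i\cap\alpha| - n = g-1$), so the minimality count is met. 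The remaining $p-1$ boundary components of $\mathbf{N}$ corresponding to $p_1, \dots, p_{p-1}$ are left untouched; together with the single tree already built, that gives $p$ connected components of $G$, each a tree, and in each component at least one vertex is a "leftover" $\partial_i$ which will bound a once-punctured disc when we cap off. Thus the hypotheses $|G| = p$ and "each component a tree with exactly one distinguished boundary vertex" are satisfied, provided the bookkeeping of how $\partial\mathbf{N}$ decomposes near each $p_i$ — using the identifications $NE_i = NW_{i+1}$, $SE_i = SW_{i+1}$, $NW_i = SE_i$ — actually yields $g+p-1$ distinct boundary curves of $\mathbf N$ and the chosen handles join them as claimed.

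The main obstacle, as in the $g=2$ remark for the closed case, is the "running out of room" phenomenon: one must verify that for the chosen intersection points $p_p,\dots,p_{g+p-1}$ the boundary components $\partial_i$ needed as handle endpoints are genuinely distinct, so that each single $1$-handle surgery does decrease the boundary count (rather than increasing it by reattaching to the same component). This is exactly the step where the odd/even case split of \S\ref{band} was needed, and I expect to handle it the same way — using the two $1$-handle surgery at the first active intersection point to correct parity when $g$ is even, and the linear single-handle chain otherwise — while checking that the presence of the extra $p-1$ untouched boundary curves does not interfere with the tree structure on the active block. Once that verification is in place, applying the criterion stated immediately before Theorem~\ref{theorem:finite type} (or its closed-surface analogue, Lemma~\ref{lemma:tree}) to the A-graph $G$ finishes the argument, and the parenthetical comparison $(g+p-1)+(g-1) = 2g+p-2$ confirms we have hit the Euler-characteristic lower bound. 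I would close by remarking that, as in \S\ref{tree}, there are many inequivalent schemes achieving this, so the construction is far from unique.
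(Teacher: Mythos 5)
Your high-level strategy is sound and correctly identifies what must be checked (a $p$-component forest A-graph together with the count $|\cup_i\gamma_i\cap\alpha|-n=g-1$), but the central structural claim is asserted rather than proved, and as stated it is false for the scheme you describe. You claim that attaching $g-1$ single $1$-handles at the last block of intersection points produces a connected tree on $g$ designated components of $\partial\mathbf{N}$, leaving the other $p-1$ components isolated. However, the single $1$-handle surgery at $p_i$ joins $SW_i$ to $NE_i$, which under the identifications $NE_i=NW_{i+1}$, $SE_i=SW_{i+1}$, $NW_i=SE_i$ are boundary components \emph{two apart} in the cyclic order. In the closed case this step-two adjacency becomes connected only because it wraps all the way around the cycle of $g$ components (with $g$ odd); once you restrict to a consecutive sub-block of the $g+p-1$ components, the wrap-around is gone and a chain of single-handle surgeries splits into \emph{two} disjoint paths (one for each parity of offset), touching $g+1$ components rather than forming one tree on $g$ of them. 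So the parity correction you defer with ``I expect to handle it the same way'' is in fact needed for every $g$, not just even $g$, and the resulting forest does not have the shape (one tree plus $p-1$ isolated vertices) on which your component count rests. The conclusion that $G$ has exactly $p$ tree components can still be salvaged for $p\ge 2$ (two paths plus $p-2$ isolated vertices also gives $p$ components), but that is not the argument you gave, and your write-up never verifies it.

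It is also worth noting that the paper avoids this verification entirely by running the construction in the opposite direction: it takes an already-validated attaching scheme whose A-graph is a connected tree (the closed-surface case), and then \emph{deletes} $p-1$ handles---restricted to single $1$-handle surgeries or to the $B_{NW/SW}$ handle of a two $1$-handle surgery, so that what remains is still a legitimate scheme. Deleting an edge from a tree always yields a forest with one more component, so the $p$-component forest condition holds automatically and no connectivity or parity analysis is needed; one then caps the $p$ boundary components with once-punctured discs and checks the intersection count $(g+p-1)+(g-1)=2g+p-2$. If you want to keep your ``build the forest directly'' route, you must either exhibit an explicit handle pattern (e.g.\ inserting one two $1$-handle surgery to bridge the two parity classes of the active block) and verify its A-graph is a forest with exactly $p$ tree components, or switch to the deletion argument.
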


\begin{proof}
    Utilizing only the two $1$-handle surgeries of \S~\ref{intro to surgery}, we take any $1$-handle attaching scheme of $\gamma$ arcs that yields a minimal coherent filling pair for $S_{g+p}$.  We throw away any $p-1$ edges of the associated tree graph, $G$, to produce a non-connected graph, $G^\prime$, which has $p$ sub-graphs.  We restrict our choices of discarded edges to those that correspond to single $1$-handle surgery and the $B_{NW/SW}$ handle of the two $1$-handle surgery.  The reader should observe that throwing away the $B_{NW/SW}$ alters the two $1$-handle surgery to a single $1$-handle surgery.

    Thus, by construction there is a sub-collection of our original collection of $\gamma$ arcs that yield an attaching scheme that producing a surface of genus, $g$, with $p$ boundary components.  Moreover, $G^\prime$ is the associated A-graph and this surface will be a regular neighborhood of the resulting coherent filling pair four valent graph, $\alpha \cup \beta^\prime$.  Now by capping off each of the $p$ boundary components with a once punctured disc, we obtain $S_{g,p}$.  Necessarily, $|\alpha \cap \beta^\prime |= 2g + p -2$.

    One last subtle observation.  We can designate one arbitrary vertex from each component of $G^\prime$ as corresponding to a one of the $p$ designated boundary components, $\partial_i$. Placing a puncture in each associated disc of $S_1 \setminus (\alpha \cup \beta)$, we can then have this $S_{1,p}$ with its coherent filling pair, $(\alpha, \beta) $, as the initial starting setup.  The previous sub-collection of $\gamma$ arcs will then be an attaching scheme that yields $(\alpha , \beta^\prime)$ as a minimal coherent filling pair in a $S_{g,p}$ surface.  
\end{proof}

 \end{document}